\documentclass[a4paper,10pt]{article}
\usepackage{a4wide}
\usepackage{amsmath}
\usepackage{amssymb}
\usepackage{amsthm}
\usepackage{latexsym}
\usepackage{graphicx}
\usepackage[english]{babel}
\usepackage{makeidx}

\newtheorem{theorem}{Theorem}[section]
\newtheorem{lemma}[theorem]{Lemma}
\newtheorem{corollary}[theorem]{Corollary}
\newtheorem{proposition}[theorem]{Proposition}

\theoremstyle{definition}
\newtheorem{definition}[theorem]{Definition}

\theoremstyle{remark}

\begin{document}
\title{A class of transversal polymatroids with Gorenstein base ring}
\author{Alin \c{S}tefan\\
 "Petroleum and Gas" University of Ploie\c sti, Romania}
\date{}
\maketitle

\begin{abstract}
In this paper, the principal tool to describe transversal polymatroids with Gorenstein base ring is polyhedral  geometry, especially the $Danilov-Stanley$ \ theorem for the characterization of canonical module. Also, we compute the $a-invariant$ and the Hilbert series of base ring associated to this class of transversal polymatroids.
\end{abstract}

\section{Introduction}
In this paper we determine the facets of the polyhedral cone generated by the exponent set of the monomials defining the base ring associated to a transversal polymatroid. The importance of knowing those facets comes from the fact that the canonical module of the base ring can be expressed in terms of the relative interior of the cone. This would allow to compute the $a-invariant$ of those base rings. The results presented were discovered by extensive computer algebra experiments
performed with {\it{Normaliz}} \ \cite{BK}.

The author would like to thank Professor Dorin Popescu for valuable suggestions and comments during the preparation of this paper.

\section{Preliminaries}
\[\]
Let $n\in \mathbb{N},$ \ $n \geq 3,$ $\sigma \in S_{n},$ \
$\sigma=(1,2,\ldots, n)$ \ the cycle of length $n,$ \ $[n]:=\{1,
2,\ldots, n\}$ \ and $\{e_{i}\}_{1 \leq i \leq n}$ \ be the
canonical base of $\mathbb{R}^{n}.$ \ For a vector $x\in \mathbb{R}^{n},$ $x=(x_{1},\ldots,x_{n})$ \ we will denote by $ | \ x \ |,$ \ $| \ x \ | := x_{1}+ \ldots + x_{n}.$ If $x^{a}$ is a monomial in $K[x_{1}, \ldots, x_{n}]$ we set $log(x^{a})=a$. Given a set $A$ of monomials, the $log \ set \ of \ A,$ denoted $log(A),$ consists of all  $log(x^{a})$ with $x^{a}\in A.$\\We consider the following set of integer vectors of $\mathbb{N}^{n}$:

\[\begin{array}{ccccccccccccccccc}
     &  &  &  &  &  &  &  &  &  &  &  &  &  &\downarrow i^{th} column &
  \end{array}
\]

$\nu_{\sigma^{0}[i] } := \left( \begin{array}{cccccccccc}
    -(n-i-1), & -(n-i-1), & \ldots & ,-(n-i-1), & (i+1), & \ldots & ,(i+1)  \\
   \end{array}
   \right),
$

\[\begin{array}{ccccccccccccccccccccccccccccccccccccccccccccccccccccc}
     &  &\downarrow (2)^{th} column &  &  &\downarrow (i+1)^{th} column &
  \end{array}
\]

$\nu_{\sigma^{1}[i] } := \left( \begin{array}{ccccccc}
     (i+1), & -(n-i-1), & \ldots & ,-(n-i-1), & (i+1), & \ldots & ,(i+1)  \\
   \end{array}
   \right),
$

\[\begin{array}{cccccccccccccccccccccc}
     &  &  &  &  &  &  &  &  &  &  &\downarrow (3)^{th} column  &  &  &\downarrow (i+2)^{th} column &
  \end{array}
\]

$\nu_{\sigma^{2}[i] } := \left( \begin{array}{cccccccc}
     (i+1), & (i+1)& ,-(n-i-1), & \ldots & ,-(n-i-1), & (i+1), & \ldots & ,(i+1)  \\
   \end{array}
   \right),
$
\[\ldots \ldots \ldots \ldots \ldots \ldots \ldots \ldots \ldots \ldots \ldots \ldots \ldots \ldots \ldots \ldots \ldots
\ldots \ldots \ldots \ldots \ldots \ldots \ldots \ldots \ldots
\ldots \ldots \ldots \ldots\]
\[\]
\[\begin{array}{ccccccccccccccccccccccc}
     &  &  &  &  &  &  &  &  &  &  &  &  &\downarrow (i-2)^{th} column  &  &  & \downarrow (n-2)^{th} column &
  \end{array}
\]

$\nu_{\sigma^{n-2}[i] } := \left(
\begin{array}{ccccccccccccccc}
     -(n-i-1), \ldots , -(n-i-1) , (i+1) , & \ldots & , (i+1) , -(n-i-1) , -(n-i-1)  \\
   \end{array}
   \right),
$

\[\begin{array}{ccccccccccccccccccccccccccccccc}
     &  &  &  &  &  &  &  &  &  &  &  &  &  &  &  &\downarrow (i-1)^{th} column  &  &  & \downarrow (n-1)^{th} column &
  \end{array}
\]

$\nu_{\sigma^{n-1}[i] } := \left( \begin{array}{cccccccc}
     -(n-i-1), & \ldots & , -(n-i-1) ,(i+1), & \ldots & ,(i+1) ,-(n-i-1)  \\
   \end{array}
   \right),\\
$

where $\sigma^{k}[i]:=\{\sigma^{k}(1), \ldots , \sigma^{k}(i)\}$
for all $1\leq i \leq n-1$ and $0\leq k \leq n-1.$\\

{\it{Remark}:} $\nu_{\sigma^{k}[n-1]}=n \ e_{[n]\setminus
\sigma^{k}[n-1]}$ for all $0\leq k
\leq n.$\\

For example, if $n=4,$ $\sigma =(1,2,3,4)\in S_{4}$ then we have
the following set of integer vectors:\\
\[\nu_{\sigma^{0}[1]}=\nu_{\{1\}}=(-2,2,2,2), \ \ \ \ \nu_{\sigma^{0}[2]}=\nu_{\{1,2\}}=(-1,-1,3,3), \ \ \ \ \nu_{\sigma^{0}[3]}=\nu_{\{1,2,3\}}=(0,0,0,4),\]
\[\nu_{\sigma^{1}[1]}=\nu_{\{2\}}=(2,-2,2,2), \ \ \ \ \nu_{\sigma^{1}[2]}=\nu_{\{2,3\}}=(3,-1,-1,3), \ \ \ \ \nu_{\sigma^{1}[3]}=\nu_{\{2,3,4\}}=(4,0,0,0),\]
\[\nu_{\sigma^{2}[1]}=\nu_{\{3\}}=(2,2,-2,2), \ \ \ \ \nu_{\sigma^{2}[2]}=\nu_{\{3,4\}}=(3,3,-1,-1), \ \ \ \ \nu_{\sigma^{2}[3]}=\nu_{\{1,3,4\}}=(0,4,0,0),\]
\[\nu_{\sigma^{3}[1]}=\nu_{\{4\}}=(2,2,2,-2), \ \ \ \ \nu_{\sigma^{3}[2]}=\nu_{\{1,4\}}=(-1,3,3,-1), \ \ \ \ \nu_{\sigma^{3}[3]}=\nu_{\{1,2,4\}}=(0,0,0,4).\]

If $0\neq a\in \mathbb{R}^{n},$ then $H_{a}$ will denote the
hyperplane of $\mathbb{R}^{n}$ through the origin with normal vector
a, that is,\[H_{a}=\{x\in \mathbb{R}^{n}\ | \ <x,a>=0\},\] where
$<,>$ is the usual inner product in $\mathbb{R}^{n}.$ The two closed
halfspaces bounded by $H_{a}$ are: \[H^{+}_{a}=\{x\in
\mathbb{R}^{n}\ | \ <x,a> \geq 0\} \ and \  H^{-}_{a}=\{x\in
\mathbb{R}^{n}\ | \ <x,a> \leq 0\}.\]

We will denote by $H_{\sigma^{k}[i]}$ the hyperplane of
$\mathbb{R}^{n}$ through the origin with normal vector
$\nu_{\sigma^{k}[i]}$, that is,\[H_{\nu_{\sigma^{k}[i]}}=\{x\in
\mathbb{R}^{n}\ | \ <x,\nu_{\sigma^{k}[i]}>=0\},\] for all $1\leq i
\leq n-1$ and $0\leq k \leq n-1.$

Recall that a $polyhedral\ cone\ Q \subset \mathbb{R}^{n}$ is the
intersection of a finite number of closed subspaces of the form
$H^{+}_{a}.$ If $A=\{\gamma_{1},\ldots, \ \gamma_{r}\}$ is a
finite set of points in $\mathbb{R}^{n}$ the $cone$ generated by
$A$, denoted by $\mathbb{R}_{+}{A},$  is defined as
\[\mathbb{R}_{+}{A}=\{\sum_{i=1}^{r}a_{i}\gamma_{i}\ | \ a_{i}\in
\mathbb{R}_{+},\ with \ 1\leq i \leq n\}.\]

An important fact is that $Q$ is a polyhedral cone in
$\mathbb{R}^{n}$ if and only if there exists a finite set $A\subset
\mathbb{R}^{n}$ such that $Q=\mathbb{R}_{+}{A},$ see ({\cite{BG}
\cite{W},Theorem 4.1.1.}).

Next we give some important definitions and results.(see \cite{B}, \cite{BH}, \cite{BG}, \cite{MS}, \cite{V}.)

\begin{definition}
A proper face of a polyhedral cone is a subset $F\subset \ Q$ such
that there is a supporting hyperplane $H_{a}$ satisfying:

$1)$ $F=Q\cap H_{a}\neq \emptyset.$

$2)$ $Q\nsubseteq H_{a}$ and $Q\subset H^{+}_{a}.$
\end{definition}

\begin{definition}
A cone $C$ is a pointed if $0$ is a face of $C.$ \ Equivalently we
can require that $x\in C$ \ and $-x\in C$ \ $\Rightarrow$ \ $x=0.$
\end{definition}

\begin{definition}
The 1-dimensional faces of a pointed cone are called $extremal \
rays.$
\end{definition}

\begin{definition}
If a polyhedral cone $Q$ is written as \[Q=H^{+}_{a_{1}}\cap \ldots
\cap H^{+}_{a_{r}}\] such that no one of the $H^{+}_{a_{i}}$ can be
omitted, then we say that this is an irreducible representation of
$Q.$
\end{definition}

\begin{definition}
A proper face $F$ of a polyhedral cone $Q\subset \ \mathbb{R}^{n}$
is called a $facet$ of $Q$ if $dim(F)=dim(Q)-1.$
\end{definition}

\begin{definition}
Let $Q$ be a polyhedral cone in $\mathbb{R}^{n}$ \ with $dim \ Q=n$
\ and such that $Q\neq\mathbb{R}^{n}.$ \ Let \[Q=H^{+}_{a_{1}}\cap
\ldots \cap H^{+}_{a_{r}}\] be the irreducible representation of
$Q.$ \ If $a_{i}=(a_{i1},\ldots,a_{in}),$ \ then we call
\[H_{a_{i}}(x):=a_{i1}x_{1}+\ldots + a_{in}x_{n}=0,\] \ $i\in [r],$ \ the
equations of the cone $Q.$
\end{definition}
The following result gives us the description of the relative interior of a polyhedral cone when we know the irreducible representation of it.

\begin{theorem}
Let $Q\subset \mathbb{R}^{n},$ \ $Q\neq \mathbb{R}^{n}$ \ be a polyhedral cone with $dim(Q)=n$ \ and let \[(*) \ Q=H^{+}_{a_{1}}\cap \ldots \cap H^{+}_{a_{n}}\] \ be a irreducible representation of $Q$ \ with $H^{+}_{a_{1}}, \ldots , H^{+}_{a_{n}}$ \ distinct, where $a_{i} \in \mathbb{R}^{n} \setminus \{0\}$ \ for all $i.$ \ Set $F_{i}=Q \cap H_{a_{i}}$ \ for $i \in [r].$ \ Then :\\
$a)$ \ $ri(Q)=\{x \in \mathbb{R}^{n} \ | \ <x, a_{1}> \ > 0, \ldots ,<x, a_{r}> \ > 0 \},$ \ where $ri(Q)$ \ is the relative interior of $Q,$ \ which in this case is just the interior.\\
$b)$ \ Each facet $F$ \ of $Q$ \ is of the form $F=F_{i}$ \ for some $i.$\\
$c)$ \ Each $F_{i}$ \ is a facet of $Q$ \ if and only if $(*)$ \ is irreducible.
\end{theorem}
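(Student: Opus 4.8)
The plan is to treat the three parts in order, using two standing remarks: since $\dim Q = n$, the relative interior $ri(Q)$ is the ordinary topological interior; and since every $H_{a_i}$ passes through the origin, $0 \in F_i$, so $F_i \neq \emptyset$, while $\dim Q = n > n-1 = \dim H_{a_i}$ forces $Q \not\subseteq H_{a_i}$. Hence each $F_i = Q \cap H_{a_i}$ is a genuine proper face of $Q$, in particular $\dim F_i \le n-1$.

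For part $a)$, the inclusion $\supseteq$ is routine: if $\langle x, a_i\rangle > 0$ for every $i$, then $x \in Q$, and by continuity of the pairings a small ball around $x$ still satisfies all the strict inequalities, so it lies in $Q$ and $x \in \operatorname{int}(Q)$. For $\subseteq$, if $x \in Q$ with $\langle x, a_j\rangle = 0$ for some $j$, then (normalizing $a_j$) the points $x - \varepsilon a_j$ violate $\langle\,\cdot\,, a_j\rangle \ge 0$ for all $\varepsilon > 0$, so no ball about $x$ lies in $Q \subseteq H^+_{a_j}$, i.e.\ $x \notin \operatorname{int}(Q)$. For part $b)$, let $F$ be a facet, so by definition $F = Q \cap H_a$ with $Q \subseteq H^+_a$, $Q \not\subseteq H_a$; pick $z$ in the relative interior of $F$. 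Since $\dim F = n-1$, $z \notin \operatorname{int}(Q)$, so by $a)$ there is an index $i$ with $\langle z, a_i\rangle = 0$, i.e.\ $z \in F_i$. I then show $F \subseteq F_i$: for $y \in F$, relative-interiority of $z$ gives $\varepsilon > 0$ with $w := z + \varepsilon(z - y) \in F \subseteq Q$, and writing $z = \tfrac{1}{1+\varepsilon}w + \tfrac{\varepsilon}{1+\varepsilon}y$ expresses $0 = \langle z, a_i\rangle$ as a convex combination of the nonnegative numbers $\langle w, a_i\rangle$ and $\langle y, a_i\rangle$, forcing $\langle y, a_i\rangle = 0$. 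Thus $F \subseteq F_i$; as $F = Q \cap H_a \subseteq F_i$ one gets $F = F_i \cap H_a$ with $H_a$ supporting $F_i$, so $F$ is a face of $F_i$, and since $\dim F = n-1 \ge \dim F_i$ it cannot be a proper face of $F_i$, whence $F = F_i$.

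For part $c)$ the content is the equivalence ``$F_i$ is a facet for every $i$'' $\Leftrightarrow$ ``$(*)$ is irreducible.'' If $(*)$ is irreducible, fix $i$; non-omissibility of $H^+_{a_i}$ yields $p$ with $\langle p, a_j\rangle \ge 0$ for $j \ne i$ and $\langle p, a_i\rangle < 0$. Taking $q \in \operatorname{int}(Q)$, so $\langle q, a_j\rangle > 0$ for all $j$, and moving along the segment $[q,p]$ to the first point $z$ at which $\langle\,\cdot\,, a_i\rangle$ vanishes, one gets $\langle z, a_i\rangle = 0$ and $\langle z, a_j\rangle > 0$ for all $j \ne i$. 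Then $\{x : \langle x, a_i\rangle = 0,\ \langle x, a_j\rangle > 0 \ \forall j \ne i\}$ is a nonempty relatively open subset of the $(n-1)$-dimensional space $H_{a_i}$ contained in $F_i$, so $\dim F_i \ge n-1$, hence $\dim F_i = n-1$ and $F_i$ is a facet. Conversely, if $(*)$ is reducible, say $Q = \bigcap_{j \ne i} H^+_{a_j}$, I claim $F_i$ is not a facet. If it were, $\dim F_i = n-1$ would give $\operatorname{span} F_i = H_{a_i}$; choosing $z$ in the relative interior of $F_i$, one checks $\langle z, a_j\rangle > 0$ for all $j \ne i$, since otherwise $z$ lies on $F_i \cap H_{a_j}$, a proper subface of $F_i$ (proper because $F_i \subseteq H_{a_j}$ would force $H_{a_i} = \operatorname{span} F_i \subseteq H_{a_j}$, hence $H^+_{a_i} = H^+_{a_j}$, against distinctness) — and a relative-interior point meets no proper subface. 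But then $z - \varepsilon a_i$ still satisfies every inequality indexed by $j \ne i$ for small $\varepsilon > 0$, so it lies in $Q$, while $\langle z - \varepsilon a_i, a_i\rangle < 0$ contradicts $Q \subseteq H^+_{a_i}$.

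I expect the converse direction of $c)$ to be the main obstacle: it rests on the lemma that a relative-interior point of the face $F_i$ avoids every proper subface of $F_i$, combined with careful bookkeeping of strict versus non-strict inequalities and the fact that the origin lies on all the hyperplanes. The rest is standard convexity and continuity, and part $b)$'s final step uses only the elementary ``face of a face'' principle together with the dimension count.
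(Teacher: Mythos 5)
Your proof is correct in substance, but it takes a genuinely different route from the paper for the simple reason that the paper offers no argument at all: its ``proof'' of this theorem is a citation to Br{\o}ndsted (Theorems 8.2.15 and 3.2.1 of \emph{Introduction to Convex Polytopes}). What you supply is a self-contained, elementary verification using only continuity of the pairings $\langle\,\cdot\,,a_i\rangle$, the extension property of relative-interior points ($z\in ri(F)$, $y\in F$ implies $z+\varepsilon(z-y)\in F$ for small $\varepsilon>0$), the fact that a relative-interior point lies on no proper face, and dimension counts. This is exactly the right toolkit, and each of the three parts goes through: (a) is the standard open/boundary dichotomy; (b) correctly identifies an arbitrary facet $F$ with some $F_i$ by locating $ri(F)$ on one of the bounding hyperplanes and then propagating the equality $\langle\,\cdot\,,a_i\rangle=0$ from the relative-interior point to all of $F$ via the convex-combination trick; (c) forward uses non-omissibility to manufacture a point of $H_{a_i}$ where all other inequalities are strict, and the converse derives a contradiction from omissibility by pushing a relative-interior point of a putative facet $F_i$ across $H_{a_i}$.

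One small patch is needed in the converse of (c). You argue that $F_i\subseteq H_{a_j}$ would force $H_{a_i}=\operatorname{span}F_i\subseteq H_{a_j}$ and ``hence $H^+_{a_i}=H^+_{a_j}$, against distinctness.'' From $H_{a_i}=H_{a_j}$ you only get that $a_j$ is a nonzero scalar multiple of $a_i$; if the scalar is negative, the halfspaces are distinct but $Q\subseteq H^+_{a_i}\cap H^+_{a_j}=H_{a_i}$, which contradicts $\dim Q=n$ rather than distinctness. Adding that one line closes the case split. Similarly, in (b) the assertion that $z\notin\operatorname{int}(Q)$ is true not ``since $\dim F=n-1$'' but because $z$ lies on a supporting hyperplane of the full-dimensional cone $Q$; the conclusion is unaffected. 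Neither point is a genuine gap, and given that the paper delegates the whole statement to a reference, your write-up is strictly more informative.
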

\begin{proof}
$See \ \cite{B} \ Theorem  \ 8.2.15, \ Theorem  \ 3.2.1 .$
\end{proof}

\begin{theorem}{\bf(Danilov, Stanley)}
Let $R=K[x_{1},\ldots,x_{n}]$ be a polynomial ring over a field $K$ and $F$ a finite set
of monomials in $R.$
If $K[F]$ is normal, then the canonical module $\omega_{K[F]}$ of $K[F],$ with respect to
standard grading, can be expressed as an ideal of $K[F]$ generated by monomials
\[\omega_{K[F]}=(\{x^{a}| \ a\in {\mathbb{N}}A\cap ri({\mathbb{R_{+}}}A)\}),\] where $A=log(F)$ and
$ri({\mathbb{R_{+}}}A)$ denotes the relative interior of
${\mathbb{R_{+}}}A.$
\end{theorem}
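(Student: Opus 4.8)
This is the classical Danilov--Stanley description of the canonical module of a normal affine semigroup ring, and the plan is to reduce it to a single local cohomology computation. Write $S := \mathbb{N}A$ for the affine semigroup generated by $A = \log(F)$, so that $K[F] = K[S]$ is the semigroup ring, graded both by $\mathbb{Z}A$ and, more coarsely, by total degree; since the monomials of $F$ have positive degree, the cone $\mathbb{R}_{+}A$ is pointed, and $K[S]$ has a well-defined irrelevant maximal graded ideal $\mathfrak{m}$. Normality of $K[F]$ is exactly the statement that $S = \mathbb{Z}A \cap \mathbb{R}_{+}A$, so $S$ is a normal affine semigroup and $K[S]$ is Cohen--Macaulay by Hochster's theorem. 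Put $d := \dim \mathbb{R}_{+}A = \dim K[S]$.

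Since $K[S]$ is Cohen--Macaulay, graded local duality collapses to the single isomorphism of $\mathbb{Z}A$-graded modules
\[
\omega_{K[S]} \;\cong\; {}^{*}\!\operatorname{Hom}_{K}\bigl(H^{d}_{\mathfrak{m}}(K[S]),\,K\bigr),
\]
where ${}^{*}\!\operatorname{Hom}_{K}(-,K)$ is the graded $K$-dual, which negates degrees. So everything reduces to computing the top local cohomology module $H^{d}_{\mathfrak{m}}(K[S])$ as a $\mathbb{Z}A$-graded vector space. I would do this with the Ishida (\v{C}ech-type) complex of $K[S]$, whose terms are the localizations of $K[S]$ at the monomials supported on the faces of $\mathbb{R}_{+}A$ and whose cohomology computes $H^{\bullet}_{\mathfrak{m}}(K[S])$; a degree-by-degree analysis over $\mathbb{Z}A$ then shows that the graded component of $H^{d}_{\mathfrak{m}}(K[S])$ in degree $a$ is one-dimensional over $K$ exactly when $-a \in \operatorname{ri}(\mathbb{R}_{+}A)$, and is $0$ otherwise.

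Dualizing negates the sign, so $\omega_{K[S]}$ has $K$-basis $\{\,x^{a}\mid a\in \mathbb{Z}A\cap \operatorname{ri}(\mathbb{R}_{+}A)\,\}$; by normality $\mathbb{Z}A\cap \operatorname{ri}(\mathbb{R}_{+}A)=\mathbb{N}A\cap \operatorname{ri}(\mathbb{R}_{+}A)$, since a point of $\mathbb{Z}A$ lying in the cone already lies in $S$. It remains to check that this $K$-subspace really is the monomial ideal of $K[S]$ generated by those $x^{a}$, which amounts to the inclusion $\operatorname{ri}(\mathbb{R}_{+}A)+\mathbb{R}_{+}A\subseteq \operatorname{ri}(\mathbb{R}_{+}A)$; this is an elementary property of convex cones, immediate from the supporting-hyperplane description of the relative interior recalled above, since adding a vector on which every defining functional is $\geq 0$ to one on which every defining functional is $>0$ leaves them all $>0$. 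Assembling the three steps yields the asserted formula for $\omega_{K[F]}$.

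The genuinely substantial step is the middle one: setting up the Ishida/\v{C}ech complex for $H^{\bullet}_{\mathfrak{m}}(K[S])$ and carrying out the graded bookkeeping that identifies exactly which degrees of $H^{d}_{\mathfrak{m}}(K[S])$ are nonzero. That is where essentially all the content of the theorem is concentrated, and it is precisely the part that is normally quoted from the literature (Danilov; Stanley; see \cite{BH}, Theorem~6.3.5); the reduction preceding it and the identification of the ideal following it are formal.
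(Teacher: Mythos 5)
The paper does not actually prove this statement: it is quoted as a classical theorem of Danilov and Stanley, with the reader referred to \cite{BH}, \cite{MS}, \cite{V} for a treatment. Your outline is the standard textbook argument and each of its reductions is correct: normality gives $S=\mathbb{Z}A\cap\mathbb{R}_{+}A$ and hence Cohen--Macaulayness by Hochster; graded local duality then reduces the claim to computing $H^{d}_{\mathfrak{m}}(K[S])$; the Ishida complex identifies its nonzero $\mathbb{Z}A$-graded components as exactly the degrees $a$ with $-a\in ri(\mathbb{R}_{+}A)$; normality converts $\mathbb{Z}A\cap ri(\mathbb{R}_{+}A)$ into $\mathbb{N}A\cap ri(\mathbb{R}_{+}A)$; and the elementary cone identity $ri(C)+C\subseteq ri(C)$ shows the resulting $K$-span is the monomial ideal generated by those $x^{a}$. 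However, as you acknowledge, the Ishida-complex computation is where essentially all of the content of the theorem sits, and you quote it rather than carry it out; so your argument defers to the literature at effectively the same point where the paper defers the entire statement. This is acceptable for a quoted classical result, but if a self-contained proof were demanded, the degree-by-degree analysis of $H^{d}_{\mathfrak{m}}(K[S])$ (\cite{BH}, Theorem 6.3.5) is the part that would have to be written out in full; the surrounding reductions are, as you say, formal.
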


The formula above represents the canonical module of $K[F]$ \ as an ideal of $K[F]$ \ generated by monomials. For a comprehensive treatment of the $Danilov-Stanley$ \ formula see $\cite{BH}, \ \cite{MS} \ \cite{V} . $\\

\section{Polymatroids}
\[\]

Let $K$ be a infinite field, $n$ and $m$ be positive integers,
$[n]=\{1, 2, \ldots , n\}$. A nonempty finite set $B$ of ${\bf
N}^{n}$ is the base set of a discrete polymatroid
${\bf{\mathcal{P}}}$ if for every $u=(u_{1}, u_{2}, \ldots,
u_{n})$, $v=(v_{1}, v_{2}, \ldots, v_{n})$ $\in B$ one has $u_{1}
+ u_{2} + \ldots + u_{n}=v_{1} + v_{2} + \ldots + v_{n}$ and for
all $i$ such that $u_{i}>v_{i}$ there exists $j$ such that
$u_{j}<v_{j}$ and $u+e_{j}-e_{i} \in B $, where $e_{k}$ denotes
the $k^{th}$ vector of the standard basis of ${\bf N}^{n}$. The
notion of discrete polymatroid is a generalization of the
classical notion of matroid, see \cite{E}  \cite{O} \cite{HH}
\cite{W1}. Associated with the base $B$ of a discret polymatroid
${\bf{\mathcal{P}}}$ one has a $K-$algebra $K[ B ]$, called the
base ring of ${\bf{\mathcal{P}}}$, defined to be the
$K-$subalgebra of the polynomial ring in $n$ indeterminates
$K[x_{1}, x_{2}, \ldots , x_{n}]$ generated by the monomials
$x^{u}$ with $u \in B$. From \cite{HH} the algebra $K[ B ]$ is
known to be normal and hence Cohen-Macaulay.

If $A_{i}$ are some non-empty subsets of $[n]$ for $1\leq i\leq
m$, ${\bf{\mathcal{A}}}=\{A_{1},\ldots,A_{m}\}$, then the
set of the vectors $\sum_{k=1}^{m} e_{i_{k}}$ with $i_{k} \in
A_{k},$ is the base of a polymatroid, called transversal
polymatroid presented by ${\bf{\mathcal{A}}}.$ The base ring of a
transversal polymatroid presented by ${\bf{\mathcal{A}}}$ denoted
by $K[{\bf{\mathcal{A}}}]$ is the ring :
\[K[{\bf{\mathcal{A}}}]:=K[x_{i_{1}}\cdot\cdot\cdot
x_{i_{m}}:i_{j}\in A_{j},1\leq j\leq m].\]

\section{Cones of dimension $n$ with $n+1$ facets.}
\[\]

\begin{lemma}
Let $1\leq i\leq n-2,$ $A:=\{
log(x_{j_{1}}\cdot\cdot\cdot x_{j_{n}}) \ | \
j_{k}\in A_{k},\ for \ all \ 1\leq k \leq n
\}\subset \mathbb{N}^{n}$ the exponent set of generators of
$K-$algebra $K[{\bf{\mathcal{A}}}],$ where
${\bf{\mathcal{A}}}=\{A_{1}=[n],\ldots,A_{i}=[n],
A_{i+1}=[n]\setminus [i],\ldots,A_{n-1}=[n]\setminus
[i],A_{n}=[n]\}$. Then the cone generated by $A$ has the
irreducible representation:
\[\mathbb{R}_{+}A= \bigcap_{a\in N}H^{+}_{a},\] where
$N=\{\nu_{\sigma^{0}[i]}, \nu_{\sigma^{k}[n-1]} \ | \ 0\leq k \leq
n-1\}.$
\end {lemma}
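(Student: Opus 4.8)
The strategy is to rewrite the $n+1$ defining inequalities of $\bigcap_{a\in N}H^{+}_{a}$ in coordinates, prove the two inclusions separately, and then verify that none of the halfspaces is redundant. First, by the Remark in Section~2, for each $k$ with $0\le k\le n-1$ one has $\nu_{\sigma^{k}[n-1]}=n\,e_{j}$, where $j$ is the unique index with $\sigma^{k}[n-1]=[n]\setminus\{j\}$; as $k$ runs through $\{0,\dots,n-1\}$ this index runs through all of $[n]$, so the halfspaces $H^{+}_{\nu_{\sigma^{k}[n-1]}}$ are exactly the $\{x_{j}\ge 0\}$, $j\in[n]$. The remaining halfspace is $H^{+}_{\nu_{\sigma^{0}[i]}}=\{x\in\mathbb{R}^{n}\mid -(n-i-1)(x_{1}+\dots+x_{i})+(i+1)(x_{i+1}+\dots+x_{n})\ge 0\}$. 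Put $C:=\bigcap_{a\in N}H^{+}_{a}$. Since $C$ lies in the nonnegative orthant it is pointed; since $(1,\dots,1)$ satisfies all $n+1$ inequalities strictly (the last one evaluates to $n>0$ there), $C$ is full-dimensional, and the $n+1$ normals are pairwise distinct because $\nu_{\sigma^{0}[i]}$ has a negative entry (here $i\le n-2$ is used).

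For the inclusion $\mathbb{R}_{+}A\subseteq C$: a generator $u\in A$ has nonnegative integer coordinates, so $\langle u,\nu_{\sigma^{k}[n-1]}\rangle\ge 0$ for all $k$; and since $A_{i+1}=\dots=A_{n-1}=[n]\setminus[i]$, only the indices $j_{1},\dots,j_{i},j_{n}$ can contribute to $u_{1}+\dots+u_{i}$, whence $u_{1}+\dots+u_{i}\le i+1$, and using $|u|=n$ one computes $\langle u,\nu_{\sigma^{0}[i]}\rangle=n\bigl(i+1-(u_{1}+\dots+u_{i})\bigr)\ge 0$. For the reverse inclusion, recall that a pointed polyhedral cone equals the conical hull of its extremal rays, so it suffices to show each extremal ray of $C$ lies in $\mathbb{R}_{+}A$. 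As $C$ is pointed and full-dimensional, every extremal ray is of the form $C\cap\bigcap_{a\in J}H_{a}$ for a subset $J\subseteq N$ whose vectors span an $(n-1)$-dimensional subspace, so $\bigcap_{a\in J}H_{a}$ is a line. If $\nu_{\sigma^{0}[i]}\notin J$ this line is $\mathbb{R}e_{k}$ (the omitted coordinate direction), and it meets $C$ in a ray only if $k\ge i+1$, since $\langle e_{k},\nu_{\sigma^{0}[i]}\rangle=-(n-i-1)<0$ for $k\le i$; in that case $n\,e_{k}\in A$ (take all $j_{\ell}=k$). If $\nu_{\sigma^{0}[i]}\in J$ the line is supported on two coordinates $p<q$ and lies in $H_{\nu_{\sigma^{0}[i]}}$: when $p,q\le i$ or $p,q\ge i+1$ the sign pattern of $\nu_{\sigma^{0}[i]}$ forces the line to meet the nonnegative orthant only at the origin, while for $p\le i<q$ one gets the ray $\mathbb{R}_{+}\bigl((i+1)e_{p}+(n-i-1)e_{q}\bigr)$, and the vector $(i+1)e_{p}+(n-i-1)e_{q}$ lies in $A$ by choosing $j_{1}=\dots=j_{i}=j_{n}=p$ and $j_{i+1}=\dots=j_{n-1}=q$. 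Hence $C\subseteq\mathbb{R}_{+}A$, and together with the first inclusion $C=\mathbb{R}_{+}A$.

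It remains to show the representation $C=\bigcap_{a\in N}H^{+}_{a}$ is irreducible, i.e.\ that no halfspace may be omitted. For $a=\nu_{\sigma^{0}[i]}$, the point $e_{1}$ satisfies every coordinate inequality but not $\langle x,a\rangle\ge 0$. For $a=\nu_{\sigma^{k}[n-1]}=n\,e_{j}$ with $j\le i$, the point $-\varepsilon e_{j}+Me_{q}$ with $q\ge i+1$ and $M>\varepsilon>0$ satisfies all the other inequalities yet has negative $j$-th coordinate. For $j\ge i+1$ the same kind of point works if we take $q\ge i+1$ with $q\ne j$, and a second index in $\{i+1,\dots,n\}$ exists precisely because $i\le n-2$ — this is where the hypothesis enters. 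The step that needs the most care is the enumeration of extremal rays above, making sure it is exhaustive; the only real subtlety there is ruling out rays supported entirely within $[i]$ or entirely within $[n]\setminus[i]$, which follows at once from the sign pattern of $\nu_{\sigma^{0}[i]}$, and the rest is elementary bookkeeping with the single linear form $-(n-i-1)(x_{1}+\dots+x_{i})+(i+1)(x_{i+1}+\dots+x_{n})$.
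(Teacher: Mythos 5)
Your proposal is correct and follows essentially the same strategy as the paper: direct verification that each generator satisfies the $n+1$ inequalities (your identity $\langle u,\nu_{\sigma^{0}[i]}\rangle = n(i+1-(u_1+\dots+u_i))$ is the same computation the paper organizes via the parameter $t$), enumeration of the extremal rays of $\bigcap_{a\in N}H^{+}_{a}$ through the two cases according to whether $H_{\nu_{\sigma^{0}[i]}}$ is among the $n-1$ active hyperplanes, and a final non-redundancy check. The only difference is cosmetic: your irreducibility step is more explicit than the paper's (you exhibit concrete witness points violating exactly one omitted halfspace, where the paper only sketches why each halfspace is needed).
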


\begin{proof}
We denote by 
$
J_{k}=\left \{ \begin{array}[pos]{cccccccccccccccc}
(i+1) \ e_{k}+ (n-i-1) \ e_{i+1} \ , \ \ if \ \ 1\leq k \leq i \ \ \ \ \ \ \\

(i+1) \ e_{1}+(n-i-1) \ e_{k} \ , \ \ \ \ \ if \ i+2\leq k \leq n \ \
\end{array} \right .
$ and by $J=n \ e_{n}.$ \
Since $A_{t}=[n]$ \ for any $t \in \{1,\ldots, i\}\cup \{n\}$ \ and $A_{r}=[n] \setminus [i]$ \ for any $r \in \{i+1,\ldots, n-1\}$ it is easy to see that for any $k \in \{1,\ldots, i\}$ \ and  $r \in \{i+2,\ldots, n\}$ \ the set of monomials $x^{i+1}_{k} \ x^{n-i-1}_{i+1},$ \ $x^{i+1}_{1} \ x^{n-i-1}_{r},$ \ $x^{n}_{n}$ \ are a subset of the generators of $K-$algebra $K[{\bf{\mathcal{A}}}].$ \ Thus the set \[\{J_{1}, \ldots, J_{i}, J_{i+2}, \ldots, J_{n}, J\}\subset A.\]
If we denote by $C$ \ the matrix with the rows the coordinates of the $\{J_{1}, \ldots, J_{i}, J_{i+2}, \ldots, J_{n}, J\},$ \ then by a simple computation we get $|det\ (C)|=n \ (i+1)^{i} \ (n-i-1)^{n-i-1}$ \ for any $1\leq i\leq n-2$ \ and $1\leq j\leq n-1.$ \ Thus, we get that the set \[\{J_{1}, \ldots, J_{i}, J_{i+2}, \ldots, J_{n}, J\}\] is lineary independent and it follows that $dim \ \mathbb{R}_{+}A = n.$ \ Since $\{J_{1}, \ldots, J_{i}, J_{i+2}, \ldots, J_{n}\}$ is linearly independent and lie on the
hyperplane $H_{\sigma^{0}[i]}$ we have that $dim(H_{\sigma^{0}[i]}\cap \mathbb{R}_{+}A)=n-1.$\\Now we will prove that $\mathbb{R}_{+}A \subset H_{a}^{+}$ for all $a \in N.$ It is enough to show that for all vectors $P \in A,$ \ $<P,a> \geq 0$ for all $a \in N.$ Since $\nu_{\sigma^{k}[n-1]}=n \ e_{[n]\setminus
\sigma^{k}[n-1]},$ where $\{e_{i}\}_{1 \leq i \leq n}$ \ is  the
canonical base of $\mathbb{R}^{n},$\ we get that $<P,\nu_{\sigma^{k}[n-1]}> \geq 0.$ \ Let $P \in A,$ \ $P=log(x_{j_{1}}\cdot\cdot\cdot x_{j_{i}}x_{j_{i+1}}\cdot\cdot\cdot x_{j_{n-1}}x_{j_{n}})$ and let $t$ to be the number of $j_{k_{s}},$ \ such that \ $1 \leq k_{s} \leq i$ \ and $j_{k_{s}} \in [i].$ \ Thus $1 \leq t \leq i.$ \ Now we have only two cases to consider:\\ $1)$ \ If $j_{n}\in [i],$ then $<P,\nu_{\sigma^{0}[i]}>= -t(n-i-1)+(i-t)(i+1)+(n-i-1)(i+1)-(n-i-1)=n(i-t)\geq 0.$ \\ $2)$ \ If $j_{n}\in [n]\setminus [i],$ then $<P,\nu_{\sigma^{0}[i]}>=-t(n-i-1)+(i-t)(i+1)+(n-i-1)(i+1)+(i+1)=n(i-t+1)>0.$ \\ Thus \[\mathbb{R}_{+}A\subseteq \bigcap_{a\in N}H^{+}_{a}.\]Now we will prove the converse inclusion: $\mathbb{R}_{+}A\supseteq \bigcap_{a\in N}H^{+}_{a}.$\\ It is clearly enough to prove that the extremal rays of the cone $\bigcap_{a\in N}H^{+}_{a}$ \ are in
${\mathbb{R}_{+}}A.$ \ Any extremal ray of the cone  $\bigcap_{a\in N}H^{+}_{a}$ \ can be written  as the intersection of $n-1$ hyperplanes $H_{a}$ \ , with $a\in N.$ \ There are two possibilities to obtain extremal rays by intersection of $n-1$ hyperplanes.\\ $First \ case.$\\ Let \ $1\leq i_{1}< \ldots < i_{n-1}\leq n$ \ be a sequence and $\{t\}=[n]\setminus \{i_{1},\ldots,i_{n-1}\}.$ \ The system of equations:
$ \ (*) \ \begin{cases}
z_{i_{1}}=0,\\
\vdots \\
z_{i_{n-1}}=0
\end{cases}$ admits the solution $x \in \mathbb{Z}_{+}^{n},$ \ $
x=\left( \begin{array}[pos]{c}
    x_{1}\\
    \vdots\\
    x_{n}
\end{array} \right)
$ with $ | \ x \ |=n,$ \ $x_{k}=n\cdot\delta_{kt}$ \ for all $1\leq k \leq n,$ where $\delta_{kt}$ is Kronecker symbol.\\There are two possibilities:\\ $1)$ \ If $1\leq t \leq i,$ \ then $ H_{\sigma^{0}[i]}(x) \ < \ 0 \ and \ thus \ x \ \notin \bigcap_{a\in N}H^{+}_{a}.$\\
$2)$ \ If $i+1\leq t \leq n,$ \ then $H_{\sigma^{0}[i]}(x) \ > \ 0$ \ and thus $x\in \bigcap_{a\in N}H^{+}_{a}$ and  is an extremal ray.\\ Thus, there exist $n-i$ \ sequences \ $1\leq i_{1}< \ldots < i_{n-1}\leq n$ \ such that the system of equations $(*)$ \ has a solution $x \in \mathbb{Z}_{+}^{n}$ \ with $ | \ x \ |=n$ \ and $H_{\sigma^{0}[i]}(x) \ > \ 0.$\\ The extremal rays are: $\{ne_{k} \ | \ i+1 \leq k \leq n\}.$\\
$Second \ case.$\\ Let \ $1\leq i_{1}< \ldots < i_{n-2}\leq n$ \ be a sequence and $\{j, \ k\}=[n]\setminus \{i_{1},\ldots,i_{n-2}\},$ \ with $j < k$ \ and 
$ \ (**) \ \begin{cases}
z_{i_{1}}=0,\\
\vdots \\
z_{i_{n-2}}=0,\\
-(n-i-1)z_{1}-\ldots-(n-i-1)z_{i}+(i+1)z_{i+1}+\ldots+(i+1)z_{n}=0
\end{cases}$\\
be the system of linear equations associated to this sequence.\\
There are two possibilities:\\
$1)$ \ If $1\leq j \leq i$ and $i+1\leq k \leq n,$ then the system of equations $(**)$ admits the solution
$
x=\left( \begin{array}[pos]{c}
    x_{1}\\
    \vdots\\
    x_{n}
\end{array} \right) \in \mathbb{Z}_{+}^{n},$ \ with $ | \ x \ |=n,$ \ with $x_{t}=(i+1)\delta_{jt}+(n-i-1)\delta_{kt}$ \ for all $1\leq t \leq n.$\\
$2)$ If $1\leq j,k \leq i$ \ or $i+1\leq j,k \leq n,$ \ then there exist no solution $x\in \mathbb{Z}_{+}^{n}$ \ with $| \ x \ | \ = \ n$ for the system of equations $(**)$ \ because otherwise $H_{\sigma^{0}[i]}(x) \ > \ 0$ \ or $H_{\sigma^{0}[i]}(x) < 0.$\\
Thus, there exist $i(n-i)$ \ sequences \ $1\leq i_{1}< \ldots < i_{n-2}\leq n$ such that the system of equations $(**)$ \ has a solution $x \in \mathbb{Z}_{+}^{n}$ \ with $ | \ x \ |=n$ \ and the extremal rays are: $\{(i+1)e_{j}+(n-i-1)e_{k} \ | \ 1 \leq j \leq i \ and \ i+1 \leq k \leq n\}.$\\In conclusion, there exist $(i+1)(n-i)$ \ extremal rays of the cone ${\bigcap}_{a\in N}H_{a}^{+}$:\[R:=\{n e_{k} \ | \ i+1 \leq k \leq n\}\cup\{(i+1)e_{j}+(n-i-1)e_{k} \ | \ 1 \leq j \leq i \ and \ i+1 \leq k \leq n\}.\]Since $R\subset A$ \ we have $\mathbb{R}_{+}A= \bigcap_{a\in N}H^{+}_{a}.$\\
It is easy to see that the representation is
irreducible because if we delete, for some $k,$ \ the hyperplane with
the normal $\nu_{\sigma^{k}[n-1]}$, then a coordinate
of a $log(x_{j_{1}}\cdot\cdot\cdot x_{j_{i}}x_{j_{i+1}}\cdot\cdot\cdot x_{j_{n-1}}x_{j_{n}})$
could be negative, which is impossible; and if we delete the
hyperplane with the normal $\nu_{\sigma^{0}[i]},$ then the cone
${\mathbb{R_{+}}}A$ would be generated by $A=\{
log(x_{j_{1}}\cdot\cdot\cdot x_{j_{n}}) \ | \ j_{k}\in [n], \ for \ all \ 1\leq
k \leq n \}$ which is impossible. \ Thus the representation
${\mathbb{R_{+}}}A= \bigcap_{a\in N}H^{+}_{a}$ is irreducible.
\end{proof}

\begin{lemma}
Let $1\leq i\leq n-2, \ 1\leq t\leq n-1,$ $A:=\{ log(x_{j_{1}}\cdot\cdot\cdot x_{j_{n}}) \ | \
j_{\sigma^{t}{(k)}}\in A_{\sigma^{t}{(k)}}, 1\leq k \leq
n\}\subset \mathbb{N}^{n}$ the exponent set of generators of
$K-$algebra  $K[{\bf{\mathcal{A}}}],$ where ${\bf{\mathcal{A}}}=\{
A_{\sigma^{t}{(k)}} \ | \ A_{\sigma^{t}{(k)}}=[n],\ for \ 1\leq
k\leq i \ and \ A_{\sigma^{t}{(k)}}=[n]\setminus \sigma^{t}[i], \ for \ i+1\leq k\leq
n-1,\ A_{\sigma^{t}{(n)}}=[n]\}$. Then the cone generated by $A$
has the irreducible representation:
\[{\mathbb{R_{+}}}A= \bigcap_{a\in N}H^{+}_{a},\] where $N=\{\nu_{\sigma^{t}[i]},\ \nu_{\sigma^{k}[n-1]} \ | \ 0\leq k \leq
n-1\}.$
\end {lemma}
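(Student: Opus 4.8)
The plan is to deduce this from the previous lemma by exploiting the cyclic symmetry of the configuration. Let $\varphi\colon\mathbb{R}^{n}\to\mathbb{R}^{n}$ be the linear isomorphism with $\varphi(e_{k})=e_{\sigma^{t}(k)}$ for $1\le k\le n$, i.e.\ the permutation matrix of $\sigma^{t}$. It is orthogonal, so $\langle\varphi(x),\varphi(y)\rangle=\langle x,y\rangle$ for all $x,y\in\mathbb{R}^{n}$; consequently $\varphi(H_{a})=H_{\varphi(a)}$ and $\varphi(H_{a}^{+})=H_{\varphi(a)}^{+}$ for every $0\neq a\in\mathbb{R}^{n}$. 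Moreover $\varphi$ restricts to an automorphism of the lattice $\mathbb{Z}^{n}$, hence it carries cones onto cones of the same dimension, and it preserves irreducibility of a presentation $Q=\bigcap_{a\in N}H_{a}^{+}$: if some halfspace could be omitted from $\{\,H_{\varphi(a)}^{+}\mid a\in N\,\}$, then applying $\varphi^{-1}$ would show that the corresponding halfspace can be omitted from $\{\,H_{a}^{+}\mid a\in N\,\}$, a contradiction.

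First I would check that $\varphi$ maps the exponent set $A^{\circ}$ of the previous lemma (the case $\sigma^{0}$) onto the exponent set $A$ of the present lemma. Applying $\varphi$ to a generator $\sum_{k=1}^{n}e_{j_{k}}$ of $A^{\circ}$, where $j_{k}\in[n]$ for $k\in\{1,\dots,i\}\cup\{n\}$ and $j_{k}\in[n]\setminus[i]$ for $k\in\{i+1,\dots,n-1\}$, gives $\sum_{k=1}^{n}e_{\sigma^{t}(j_{k})}$, and $\sigma^{t}([n])=[n]$ while $\sigma^{t}([n]\setminus[i])=[n]\setminus\sigma^{t}[i]$. Hence $\varphi(A^{\circ})$ is the base of the transversal polymatroid whose presenting family consists of $i+1$ copies of $[n]$ and $n-1-i$ copies of $[n]\setminus\sigma^{t}[i]$. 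Since the set of generating monomials depends only on this multiset of subsets, and since $i\le n-2$ forces $\sigma^{t}(n)\notin\sigma^{t}[i]$, this is exactly the family $\mathcal A$ of the present lemma, up to the relabelling $k\mapsto\sigma^{t}(k)$ of the index positions. Thus $\varphi(\mathbb{R}_{+}A^{\circ})=\mathbb{R}_{+}A$.

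Next I would compute $\varphi(N^{\circ})$, where $N^{\circ}=\{\,\nu_{\sigma^{0}[i]},\nu_{\sigma^{k}[n-1]}\mid 0\le k\le n-1\,\}$ is the normal set of the previous lemma. From the shape of the vectors, $\nu_{\sigma^{k}[i]}=-(n-i-1)\sum_{l\in\sigma^{k}[i]}e_{l}+(i+1)\sum_{l\in[n]\setminus\sigma^{k}[i]}e_{l}$, and since $\varphi\bigl(\sum_{l\in S}e_{l}\bigr)=\sum_{l\in\sigma^{t}(S)}e_{l}$ with $\sigma^{t}(\sigma^{0}[i])=\sigma^{t}[i]$, we get $\varphi(\nu_{\sigma^{0}[i]})=\nu_{\sigma^{t}[i]}$. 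For the other normals the Remark gives $\nu_{\sigma^{k}[n-1]}=n\,e_{m_{k}}$ with $\{m_{k}\}=[n]\setminus\sigma^{k}[n-1]$, so $\varphi(\nu_{\sigma^{k}[n-1]})=n\,e_{\sigma^{t}(m_{k})}=n\,e_{[n]\setminus\sigma^{t+k}[n-1]}=\nu_{\sigma^{t+k}[n-1]}$, and $t+k$ runs over a complete residue system modulo $n$ as $k$ does. Therefore $\varphi(N^{\circ})=\{\,\nu_{\sigma^{t}[i]},\nu_{\sigma^{k}[n-1]}\mid 0\le k\le n-1\,\}=N$. Applying $\varphi$ to the irreducible representation $\mathbb{R}_{+}A^{\circ}=\bigcap_{a\in N^{\circ}}H_{a}^{+}$ supplied by the previous lemma now yields $\mathbb{R}_{+}A=\bigcap_{a\in N}H_{a}^{+}$, and this representation is irreducible by the remark in the first paragraph.

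The one step requiring care is the identification $\varphi(A^{\circ})=A$: one must keep track of the two different labellings of $[n]$ used in the two statements (here the constraints are written as $j_{\sigma^{t}(k)}\in A_{\sigma^{t}(k)}$) and verify that, after applying $\varphi$, the multiset of constraints really is $i+1$ copies of $[n]$ together with $n-1-i$ copies of $[n]\setminus\sigma^{t}[i]$, i.e.\ exactly the family of the present lemma. Everything else is a formal consequence of $\varphi$ being an orthogonal lattice automorphism. Alternatively, one could simply repeat the proof of the previous lemma word for word, replacing $\sigma^{0}$ by $\sigma^{t}$ and permuting coordinates by $\sigma^{t}$ throughout; the argument above is merely a streamlined version of that.
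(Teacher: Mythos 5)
Your proposal is correct and is essentially the paper's own argument: the paper disposes of this lemma in one line by noting that the algebra here is isomorphic to that of Lemma 4.1 (via the coordinate permutation induced by $\sigma^{t}$), and your write-up simply makes that isomorphism explicit, verifying that the permutation matrix carries the exponent set, the normal set, and the irreducible representation of Lemma 4.1 onto those claimed here. The detailed checks you supply (in particular $\varphi(\nu_{\sigma^{0}[i]})=\nu_{\sigma^{t}[i]}$ and $\varphi(\nu_{\sigma^{k}[n-1]})=\nu_{\sigma^{t+k}[n-1]}$) are exactly the content the paper leaves implicit.
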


\begin{proof}
The proof goes as in Lemma 4.1. since the algebras from Lemmas 4.1. and 4.2. are
isomorphic.
\end{proof}

\section{The a-invariant and the canonical module}

\begin{lemma}
The $K-$ algebra
$K[{\bf{\mathcal{A}}}],$ where ${\bf{\mathcal{A}}}=\{
A_{\sigma^{t}{(k)}} \ | \ A_{\sigma^{t}{(k)}}=[n],\ for \
1\leq k\leq i, \ and \ A_{\sigma^{t}{(k)}}=[n]\setminus
\sigma^{t}[i], \ for \ i+1\leq k\leq n-1,\ A_{\sigma^{t}{(n)}}=[n]\},$ is Gorenstein ring
for all $0\leq t\leq n-1$ and $1\leq i \leq n-2$.
\end{lemma}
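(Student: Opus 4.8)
The plan is to combine the Danilov--Stanley theorem (Theorem~2.8) with the explicit irreducible representation of the cone established in Lemmas~4.1 and 4.2. Since $K[\mathcal{A}]$ is the base ring of a transversal polymatroid, it is normal (as quoted from \cite{HH}), hence Cohen--Macaulay, so the canonical module $\omega_{K[\mathcal{A}]}$ is the ideal generated by the monomials $x^{a}$ with $a\in \mathbb{N}A\cap ri(\mathbb{R}_{+}A)$. To show that $K[\mathcal{A}]$ is Gorenstein it suffices to prove that this monomial ideal is principal, equivalently that $\mathbb{N}A\cap ri(\mathbb{R}_{+}A)$ has a unique minimal element with respect to the componentwise partial order, which is then of the form $c+\mathbb{N}A\cap\mathbb{R}_{+}A$-translate structure; concretely, it is enough to exhibit one vector $c\in\mathbb{N}A\cap ri(\mathbb{R}_{+}A)$ such that $c-\gamma\in\mathbb{R}_{+}A$ for every generator $\gamma$ forcing membership, and show minimality.

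The first step is to use Theorem~2.7(a): since by Lemma~4.2 the cone has the irreducible representation $\mathbb{R}_{+}A=\bigcap_{a\in N}H^{+}_{a}$ with $N=\{\nu_{\sigma^{t}[i]},\ \nu_{\sigma^{k}[n-1]}\mid 0\leq k\leq n-1\}$, the relative interior (which is the interior, as $\dim\mathbb{R}_{+}A=n$) is $ri(\mathbb{R}_{+}A)=\{x\in\mathbb{R}^{n}\mid \langle x,a\rangle>0 \text{ for all } a\in N\}$. So I need the integer points $x\in\mathbb{N}A$ satisfying $\langle x,\nu_{\sigma^{k}[n-1]}\rangle>0$ for all $k$ (i.e. every coordinate of $x$ in the complement sense is strictly positive — more precisely $x_{j}>0$ whenever $j\in[n]\setminus\sigma^{k}[n-1]$, which ranges over all singletons, so simply $x_{j}>0$ for all $j$) and $\langle x,\nu_{\sigma^{t}[i]}\rangle>0$. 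Note $\mathbb{N}A$ consists of the nonnegative integer vectors that are sums of the generators; since each generator has coordinate sum $n$, points of $\mathbb{N}A$ have coordinate sum a multiple of $n$. Combining these, I would search for the $\preceq$-minimal such $x$. By the symmetry used in Lemma~4.2 (the algebras for different $t$ are isomorphic), it suffices to treat $t=0$, where $\langle x,\nu_{\sigma^{0}[i]}\rangle = -(n-i-1)(x_{1}+\dots+x_{i})+(i+1)(x_{i+1}+\dots+x_{n})>0$.

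The core computation is to identify the candidate $c$. Guided by the extremal ray description $R=\{ne_{k}\mid i+1\le k\le n\}\cup\{(i+1)e_{j}+(n-i-1)e_{k}\mid 1\le j\le i,\ i+1\le k\le n\}$ from Lemma~4.1, I would try the vector that makes all the defining inequalities tight-by-one, e.g. something like $c=\sum_{k=1}^{n}e_{k}$ scaled appropriately plus a correction to land in $\mathbb{N}A$ and to make $\langle c,\nu_{\sigma^{0}[i]}\rangle$ equal its minimal positive value $n$. A natural guess is $c=(1,\dots,1,2,\dots,2)$ or a similar low-weight vector; I would verify (i) $c\in\mathbb{N}A$ by writing it explicitly as a nonnegative integer combination of elements of $R\subset A$, (ii) $c\in ri(\mathbb{R}_{+}A)$ by checking the strict inequalities, and (iii) minimality: for each $a\in N$, decreasing any coordinate of $c$ by $1$ destroys either positivity of a coordinate or the inequality $\langle\cdot,\nu_{\sigma^{0}[i]}\rangle>0$. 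Then every other $x\in\mathbb{N}A\cap ri(\mathbb{R}_{+}A)$ satisfies $x-c\in\mathbb{R}_{+}A\cap\mathbb{N}A$ (this last point needs the normality/semigroup structure), so $x^{c}$ generates $\omega_{K[\mathcal{A}]}$ as an ideal, proving it is a cyclic module and hence $K[\mathcal{A}]$ is Gorenstein.

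The main obstacle I anticipate is step (iii), the minimality/uniqueness argument: showing not merely that $c$ lies in $\mathbb{N}A\cap ri(\mathbb{R}_{+}A)$ but that it is the \emph{unique} $\preceq$-minimal element, which amounts to checking that $x-c\in\mathbb{N}A$ for all relevant $x$. This typically follows from a careful accounting using the irreducible half-space representation: if $x\succeq 0$, $\langle x,\nu_{\sigma^{0}[i]}\rangle\ge n$, and all coordinates of $x$ are $\ge 1$, one must produce a decomposition $x=c+(\text{element of }\mathbb{N}A)$ by peeling off extremal generators from $R$ while preserving nonnegativity and the $\nu_{\sigma^{0}[i]}$-inequality — essentially an induction on $|x|$. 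A secondary technical point is making the correspondence between $c$ and the $a$-invariant precise: once $\omega_{K[\mathcal{A}]}=(x^{c})$, the $a$-invariant is $-|c|/n$ (the degree shift), which ties in with the paper's stated goal of computing $a$-invariants, but for the Gorenstein claim only the principality is needed.
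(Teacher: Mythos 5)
Your overall strategy is the paper's: reduce to $t=0$ by the isomorphism of Lemma~4.2, invoke normality from \cite{HH} and the Danilov--Stanley theorem, read off $ri(\mathbb{R}_{+}A)$ from the irreducible representation of Lemma~4.1, and show that $\mathbb{N}A\cap ri(\mathbb{R}_{+}A)$ is a translate of $\mathbb{N}A\cap\mathbb{R}_{+}A$, so that $\omega_{K[\mathcal{A}]}$ is principal. But as written the proposal has a genuine gap at exactly the point you flag as ``the main obstacle'': you never identify the generator, and you never supply the argument that makes the translation work. The correct candidate is simply $c=(1,\dots,1)=\log(x_{1}\cdots x_{n})$, which is itself one of the monomial generators of $K[\mathcal{A}]$ (take $j_{k}=k$ for all $k$); your guess $(1,\dots,1,2,\dots,2)$ is not even in $\mathbb{N}A$ in general, since its coordinate sum need not be a multiple of $n$.

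More importantly, the step you defer --- that every $\alpha\in\mathbb{N}A\cap ri(\mathbb{R}_{+}A)$ satisfies $\alpha-(1,\dots,1)\in\mathbb{R}_{+}A$ --- does not follow from ``peeling off extremal generators by induction on $|x|$'' without further input; the issue is precisely that subtracting $(1,\dots,1)$ lowers the facet functional by $n/d$ (where $d=\gcd(n,i+1)$), while strict positivity only guarantees it was $\geq 1$ beforehand. The missing idea is arithmetic: writing $S_{1}=\sum_{k\leq i}\alpha_{k}$ and using $\sum_{k}\alpha_{k}=tn$, one computes
\[
-\tfrac{n-i-1}{d}\,S_{1}+\tfrac{i+1}{d}\bigl(tn-S_{1}\bigr)=\tfrac{n}{d}\bigl((i+1)t-S_{1}\bigr),
\]
so on lattice points of $\mathbb{N}A$ the (primitive) facet functional only takes values in $\frac{n}{d}\mathbb{Z}$; hence being positive forces it to be at least $n/d$, which is exactly the drop incurred. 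Together with normality (to pass from $\mathbb{Z}^{n}\cap\mathbb{R}_{+}A$ back to $\mathbb{N}A$), this yields $\mathbb{N}A\cap ri(\mathbb{R}_{+}A)=(1,\dots,1)+(\mathbb{N}A\cap\mathbb{R}_{+}A)$ and thus $\omega_{K[\mathcal{A}]}=(x_{1}\cdots x_{n})K[\mathcal{A}]$. Without this divisibility observation your outline does not close.
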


\begin{proof}
Since the algebras from Lemmas 4.1 and 4.2 are
isomorphic it is enough to prove the case $t=0.$

We will show that the canonical module $\omega_{K[{\bf{\mathcal{A}}}]}$ is generated by $(x_{1}\cdot\cdot\cdot x_{n})K[{\bf{\mathcal{A}}}].$ Since $K-$ algebra $K[{\bf{\mathcal{A}}}]$ is normal, using the $Danilov-Stanley$ theorem we get that the canonical module $\omega_{K[{\bf{\mathcal{A}}}]}$ is \[\omega_{K[{\bf{\mathcal{A}}}]}=\{x^{\alpha} \ | \ \alpha \in \mathbb{N}A \cap ri(\mathbb{R}_{+}A)\}.\]\\
Let $d$ be the greatest common divizor of $n$ and $i+1,$ $d=gcd(n, \ i+1),$ then the equation of the facet $H_{\nu_{\sigma^{0}[i]}}$ is 
\[H_{\nu_{\sigma^{0}[i]}}: \ -\frac{(n-i-1)}{d}\sum_{k=1}^{i}x_{k}+ \frac{(i+1)}{d}\sum_{k=i+1}^{n}x_{k}=0.\] The  relative interior of the cone $\mathbb{R}_{+}A$ is:  \[ri(\mathbb{R}_{+}A)=\{x \in \mathbb{R}^{n} \ | \ x_{k} \ > \ 0, \ for \ all \ 1\leq k \leq n, \ -\frac{(n-i-1)}{d}\sum_{k=1}^{i}x_{k}+ \frac{(i+1)}{d}\sum_{k=i+1}^{n}x_{k} \ > \ 0 \}.\]  We will show that $\mathbb{N}A \cap ri(\mathbb{R}_{+}A)= \ (1,\ldots, 1) \ + \ (\mathbb{N}A \cap \mathbb{R}_{+}A).$\\ It is clear that $ri(\mathbb{R}_{+}A) \ \supset \ (1,\ldots, 1) \ + \mathbb{R}_{+}A.$\\ If $(\alpha_{1}, \alpha_{2}, \ldots, \alpha_{n})\in \mathbb{N}A \cap ri(\mathbb{R}_{+}A),$ then $\alpha_{k} \geq 1, for \ all \ 1\leq k\leq n$ and \[-\frac{(n-i-1)}{d}\sum_{k=1}^{i}\alpha_{k}+ \frac{(i+1)}{d}\sum_{k=i+1}^{n}\alpha_{k} \ \geq 1 \ and \ \sum_{k=1}^{n}\alpha_{k}=t \ n \ for \ some \ t\geq 1.\] 

We claim that there exist $(\beta_{1}, \beta_{2}, \ldots, \beta_{n})\in \mathbb{N}A \cap \mathbb{R}_{+}A$ such that $(\alpha_{1}, \alpha_{2}, \ldots, \alpha_{n})=(\beta_{1}+1, \beta_{2}+1, \ldots, \beta_{n}+1).$
Let $\beta_{k}=\alpha_{k}-1$ for all $1\leq k \leq n.$ It is clear that $\beta_{k}\geq 0$ and \[-\frac{(n-i-1)}{d}\sum_{k=1}^{i}\beta_{k}+ \frac{(i+1)}{d}\sum_{k=i+1}^{n}\beta_{k}=-\frac{(n-i-1)}{d}\sum_{k=1}^{i}\alpha_{k}+ \frac{(i+1)}{d}\sum_{k=i+1}^{n}\alpha_{k}-\frac{n}{d}.\] 
\[If -\frac{(n-i-1)}{d}\sum_{k=1}^{i}\alpha_{k}+ \frac{(i+1)}{d}\sum_{k=i+1}^{n}\alpha_{k}=j \ with \ 1\leq j \leq \frac{n}{d}-1, \ then \ we \ will \ get \ a \ contadiction.\] Indeed, since $n$ divides $\sum_{k=1}^{n}\alpha_{k},$ it follows $\frac{n}{d}$ divides $j$ which is false.\\ So we have  \[-\frac{(n-i-1)}{d}\sum_{k=1}^{i}\beta_{k}+ \frac{(i+1)}{d}\sum_{k=i+1}^{n}\beta_{k}=-\frac{(n-i-1)}{d}\sum_{k=1}^{i}\alpha_{k}+ \frac{(i+1)}{d}\sum_{k=i+1}^{n}\alpha_{k}-\frac{n}{d}\geq 0.\] Thus $(\beta_{1}, \beta_{2}, \ldots, \beta_{n})\in \mathbb{N}A \cap \mathbb{R}_{+}A$ and $(\alpha_{1}, \alpha_{2}, \ldots, \alpha_{n})\in \mathbb{N}A \cap ri(\mathbb{R}_{+}A)$.\\ Since $\mathbb{N}A \cap ri(\mathbb{R}_{+}A)= \ (1,\ldots, 1) \ + \ (\mathbb{N}A \cap \mathbb{R}_{+}A),$ we get that $\omega_{K[{\bf{\mathcal{A}}}]}= \ (x_{1}\cdot\cdot\cdot x_{n})K[{\bf{\mathcal{A}}}].$
\end{proof}

Let $S$ be a standard graded $K-algebra$ over a field $K$. Recall that the $a-invariant$ of $S,$
denoted $a(S),$ is the degree as a rational function of the Hilbert series of $S,$ see for instance
$(\cite{V}, \ p. \ 99).$ If $S$ is $Cohen-Macaulay$ and $\omega_{S}$ is the canonical module of $S,$ then \[a(S)= \ - \ min \ \{i\ | \ (\omega_{S})_{i} \neq 0\},\] see (\cite{BH}, \ p. \ 141) and (\cite{V}, \ Proposition 4.2.3).
In our situation $S = K[{\bf{\mathcal{A}}}]$ is normal \cite{HH} and
consequently $Cohen-Macaulay,$ thus this formula applies. As consequence of $Lemma \ 5.1.$ we have the following:
\begin{corollary}
The $a-invariant$ of $K[{\bf{\mathcal{A}}}]$ is $a(K[{\bf{\mathcal{A}}}])= \ -1.$
\end{corollary}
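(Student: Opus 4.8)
The plan is to read off the $a$-invariant directly from the explicit description of the canonical module provided by Lemma 5.1, using the standard fact recalled just above the corollary: since $S = K[{\bf{\mathcal{A}}}]$ is normal by \cite{HH}, hence Cohen--Macaulay, one has $a(S) = - \min\{i \mid (\omega_S)_i \neq 0\}$. So the whole task reduces to determining the least degree in which $\omega_{K[{\bf{\mathcal{A}}}]}$ is nonzero.

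First I would invoke Lemma 5.1 (together with Lemma 4.2, which reduces every $t$ to the case $t=0$) to get $\omega_{K[{\bf{\mathcal{A}}}]} = (x_{1}\cdots x_{n})\,K[{\bf{\mathcal{A}}}]$ as a monomial ideal of $K[{\bf{\mathcal{A}}}]$. Next I would observe that $x_{1}\cdots x_{n}$ is actually one of the monomial generators of $K[{\bf{\mathcal{A}}}]$: it equals $x^{u}$ for $u = (1,1,\ldots,1)$, and $u$ lies in the base $B$ because the choice $j_{k} = k$ for $1\le k\le n$ is a valid transversal --- indeed $k \in A_{k} = [n]$ when $k \in \{1,\ldots,i\}\cup\{n\}$, and $k \in A_{k} = [n]\setminus[i]$ when $i+1\le k\le n-1$. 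Hence $x_{1}\cdots x_{n}$ lies in degree $1$ of the standard grading of $K[{\bf{\mathcal{A}}}]$.

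It then follows that the principal ideal $(x_{1}\cdots x_{n})\,K[{\bf{\mathcal{A}}}]$, being generated by a degree-$1$ element over the standard graded ring $K[{\bf{\mathcal{A}}}]$ with $(K[{\bf{\mathcal{A}}}])_{0} = K$, satisfies $(\omega_{K[{\bf{\mathcal{A}}}]})_{0} = 0$ and $(\omega_{K[{\bf{\mathcal{A}}}]})_{1} \neq 0$, so that $\min\{i \mid (\omega_{K[{\bf{\mathcal{A}}}]})_{i} \neq 0\} = 1$. Plugging this into the formula above yields $a(K[{\bf{\mathcal{A}}}]) = -1$. There is essentially no obstacle here: the corollary is immediate once Lemma 5.1 is in hand, and the only point requiring even a line of verification is that $(1,\ldots,1)$ belongs to the base $B$, i.e. that $x_{1}\cdots x_{n}$ is genuinely a degree-$1$ element of $K[{\bf{\mathcal{A}}}]$ rather than merely an element of the ideal it generates.
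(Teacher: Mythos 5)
Your proposal is correct and follows essentially the same route as the paper: both read off $\min\{i \mid (\omega_{K[{\bf{\mathcal{A}}}]})_{i}\neq 0\}=1$ from the identification $\omega_{K[{\bf{\mathcal{A}}}]}=(x_{1}\cdots x_{n})K[{\bf{\mathcal{A}}}]$ in Lemma 5.1 and the Cohen--Macaulay formula for the $a$-invariant. Your explicit check that $(1,\ldots,1)$ lies in the base, so that $x_{1}\cdots x_{n}$ really is a degree-one element, is a small but welcome detail the paper leaves implicit.
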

\begin{proof}
Let $\{x^{\alpha_{1}}, \ldots, x^{\alpha_{q}}\}$  the  generators  of  $K-algebra$ $K[{\bf{\mathcal{A}}}].$ \  $K[{\bf{\mathcal{A}}}]$ is standard graded algebra with the grading
\[K[{\bf{\mathcal{A}}}]_{i}=\sum_{|c|=i}K(x^{\alpha_{1}})^{c_{1}}\cdot \cdot \cdot (x^{\alpha_{q}})^{c_{q}}, \ where \ |c|=c_{1}+\ldots+c_{q}. \] Since $\omega_{K[{\bf{\mathcal{A}}}]}= \ (x_{1}\cdot\cdot\cdot x_{n})K[{\bf{\mathcal{A}}}]$ it follows that $ min \ \{i\ | \ (\omega_{K[{\bf{\mathcal{A}}}]})_{i} \neq 0\}=1,$ thus $a(K[{\bf{\mathcal{A}}}])= \ -1.$
\end{proof}
\section{Ehrhart function}
We consider  a fixed set of distinct monomials
$F=\{x^{\alpha_{1}},\ldots,x^{\alpha_{r}}\}$ \ in a polynomial ring
$R=K[x_{1},\ldots, x_{n}]$ \ over a field $K.$ \\ Let
\[\mathcal{P}=conv(log(F))\] \ be the convex hull of the set $log(F)=\{\alpha_{1},\ldots,
\alpha_{r}\}.$ \\ The $normalized \ Ehrhart \ ring$ \ of $\mathcal{P}$ is the
graded algebra \[A_{\mathcal{P}}=\bigoplus^{\infty}_{j=0}(A_{\mathcal{P}})_{j} \subset
R[T]\] where the $j-th$ \ component  is given by
\[(A_{\mathcal{P}})_{j}=\sum_{\alpha \in \mathbb{Z} \ log(F)\cap \ 
j\mathcal{P}} \ K \ x^{\alpha} \ T^{j}.\]
The $normalized \ Ehrhart \ function$ of $\mathcal{P}$ is defined as
\[E_{\mathcal{P}}(j)= dim_{K}(A_{\mathcal{P}})_{j}=|\ \mathbb{Z} \ {log(F)}\cap j\mathcal{P} \ |.\] From
{\cite{V}, Proposition 7.2.39  and  Corollary 7.2.45} we have the following important result:
\begin{theorem}
If $K[F]$ is a standard graded subalgebra of $R$ and $h$ \ is the
$Hilbert \ function$ of $K[F],$ \ then:\\
$a)$ \ $h(j)\leq E_{\mathcal{P}}(j)$ \ for all $j\geq 0,$ \ and\\
$b)$ \ $h(j)= E_{\mathcal{P}}(j)$ \ for all $j\geq 0$ \ if and only if $K[F]$
is normal.
\end{theorem}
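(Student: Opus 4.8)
The plan is to realize $A_{\mathcal{P}}$ as the integral closure of $K[F]$ inside its field of fractions, after which both assertions drop out by comparing Hilbert functions degree by degree. Write $A=K[F]$ and $\alpha_{k}=log(x^{\alpha_{k}})$. Since $A$ is standard graded with the generators $x^{\alpha_{k}}$ in degree one, there is a linear functional $\phi$ with $\phi(\alpha_{k})=1$ for all $k$ (equivalently, $\sum_{k}m_{k}\alpha_{k}=0$ with $m_{k}\in\mathbb{Z}$ forces $\sum_{k}m_{k}=0$). Part $a)$ is then direct: the degree-$j$ component $A_{j}$ is spanned by the monomials $x^{\beta}$ with $\beta=\sum_{k}c_{k}\alpha_{k}$, $c\in\mathbb{N}^{r}$, $|c|=j$; each such $\beta$ lies in $\mathbb{Z}\,log(F)$ and, for $j\ge 1$, is $j$ times the convex combination $\sum_{k}(c_{k}/j)\alpha_{k}$ of the $\alpha_{k}$, hence lies in $j\mathcal{P}$ too (the case $j=0$ being trivial). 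Thus $x^{\beta}\mapsto x^{\beta}T^{j}$ gives a $K$-linear injection $A_{j}\hookrightarrow(A_{\mathcal{P}})_{j}$, so $h(j)=\dim_{K}A_{j}\le\dim_{K}(A_{\mathcal{P}})_{j}=E_{\mathcal{P}}(j)$.

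For part $b)$ the heart of the matter is the identification $A_{\mathcal{P}}\cong\overline{A}$, the normalization of $A$. As $A$ is a monomial subalgebra of $R$, its integral closure in $\operatorname{Frac}(A)$ is the monomial algebra $K[\,\mathbb{Z}\,log(F)\cap\mathbb{R}_{+}log(F)\,]$, the standard description of the normalization of an affine semigroup ring (see e.g.\ \cite{BH}, \cite{V}); grading $\overline{A}$ by the value of $\phi$, its degree-$j$ piece is spanned by the $x^{\gamma}$ with $\gamma\in\mathbb{Z}\,log(F)\cap\mathbb{R}_{+}log(F)$ and $\phi(\gamma)=j$. Now $\mathbb{R}_{+}log(F)\cap\{\phi=j\}=\{\sum_{k}\lambda_{k}\alpha_{k}\mid\lambda_{k}\ge 0,\ \sum_{k}\lambda_{k}=j\}=j\mathcal{P}$, the first equality using $\phi(\alpha_{k})=1$; and if $\gamma\in\mathbb{Z}\,log(F)\cap j\mathcal{P}$, writing $\gamma=\sum_{k}m_{k}\alpha_{k}$ with $m_{k}\in\mathbb{Z}$ and applying $\phi$ forces $\sum_{k}m_{k}=\phi(\gamma)=j$ automatically. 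Hence $\overline{A}_{j}$ is spanned by $\{x^{\gamma}\mid\gamma\in\mathbb{Z}\,log(F)\cap j\mathcal{P}\}$, which is precisely $(A_{\mathcal{P}})_{j}$, so $\overline{A}\cong A_{\mathcal{P}}$ as graded $K$-algebras.

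Now conclude. By part $a)$ there is a graded inclusion $A\hookrightarrow A_{\mathcal{P}}\cong\overline{A}$ with each $A_{j}\subseteq(A_{\mathcal{P}})_{j}$ finite dimensional, so $A=A_{\mathcal{P}}$ if and only if $\dim_{K}A_{j}=\dim_{K}(A_{\mathcal{P}})_{j}$ for every $j$, i.e.\ $h(j)=E_{\mathcal{P}}(j)$ for all $j$. Since $A_{\mathcal{P}}\cong\overline{A}$ is normal in any case and $A$ is normal exactly when $A=\overline{A}$, we obtain $h(j)=E_{\mathcal{P}}(j)$ for all $j\ge 0$ $\iff$ $A=A_{\mathcal{P}}$ $\iff$ $A=\overline{A}$ $\iff$ $K[F]$ is normal, which is $b)$. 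The main obstacle is the bookkeeping in the middle paragraph: one must keep the two gradings aligned — by the dilation parameter $j$ of $j\mathcal{P}$ on the Ehrhart side and by the degree functional $\phi$ on the normalization side — and it is precisely here that the standard-graded hypothesis is used, both to produce $\phi$ and to force $\sum_{k}m_{k}=j$; everything else is the standard structure theory of normalizations of monomial algebras.
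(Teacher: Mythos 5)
Your proof is correct. The paper does not prove this theorem itself --- it simply quotes it from Villarreal (Proposition 7.2.39 and Corollary 7.2.45) --- and your argument, which identifies the normalized Ehrhart ring $A_{\mathcal{P}}$ with the normalization $\overline{K[F]}=K[\,\mathbb{Z}\log(F)\cap\mathbb{R}_{+}\log(F)\,]$ via the degree functional $\phi$ and then compares graded components, is essentially the standard proof found in that reference.
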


In this section we will compute the Hilbert function and the Hilbert series for $K-$ algebra $K[{\bf{\mathcal{A}}}],$ where $\bf{\mathcal{A}}$ satisfied the hypothesis of $Lemma \ 4.1.$
\begin{proposition}
In the hypothesis of $Lemma \ 4.1.$, the Hilbert function of $K-$ algebra $K[{\bf{\mathcal{A}}}]$ is :\[h(t)=\sum_{k=0}^{(i+1)t}\binom{k+i-1}{k}\binom{nt-k+n-i-1}{nt-k}.\]
\end{proposition}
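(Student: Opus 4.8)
The plan is to use Theorem 6.1 together with the normality of $K[{\bf{\mathcal{A}}}]$ (established in \cite{HH}) to reduce the computation of the Hilbert function $h(t)$ to counting lattice points, namely
\[
h(t)=E_{\mathcal{P}}(t)=\big|\,\mathbb{Z}\,\log(F)\cap t\mathcal{P}\,\big|,
\]
where $\mathcal{P}=\mathrm{conv}(\log(F))$ and $F$ is the generating set of monomials. Since $K[{\bf{\mathcal{A}}}]$ is generated in a single degree, the lattice $\mathbb{Z}\log(F)$ is the sublattice of $\mathbb{Z}^n$ of points with coordinate sum divisible by $n$, and $t\mathcal{P}$ consists of those vectors with coordinate sum exactly $nt$; moreover, by Lemma 4.1 the cone $\mathbb{R}_+A$ is cut out by the coordinate halfspaces together with $H^+_{\nu_{\sigma^0[i]}}$, so a lattice point $\alpha=(\alpha_1,\dots,\alpha_n)\in\mathbb{Z}^n_{\ge 0}$ with $|\alpha|=nt$ lies in $t\mathcal{P}\cap\mathbb{Z}\log(F)$ exactly when
\[
(n-i-1)\sum_{k=1}^{i}\alpha_k\ \le\ (i+1)\sum_{k=i+1}^{n}\alpha_k .
\]
So the first step is to translate the Hilbert function into the number of such nonnegative integer solutions.

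The second step is the combinatorial count itself. I would stratify by $k:=\sum_{k=1}^{i}\alpha_k$, the total mass on the first $i$ coordinates. For fixed $k$, the number of ways to distribute $k$ among $\alpha_1,\dots,\alpha_i$ is $\binom{k+i-1}{k}$ (compositions of $k$ into $i$ nonnegative parts), which matches $\binom{k+i-1}{i-1}$; the remaining mass $nt-k$ must be distributed among $\alpha_{i+1},\dots,\alpha_n$ (that is $n-i$ coordinates), giving $\binom{(nt-k)+(n-i)-1}{nt-k}=\binom{nt-k+n-i-1}{nt-k}$ ways. The inequality constraint becomes $(n-i-1)k\le (i+1)(nt-k)$, i.e. $nk\le (i+1)nt$, i.e. $k\le (i+1)t$; this is exactly the range of summation. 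Summing over $k=0,1,\dots,(i+1)t$ yields precisely
\[
h(t)=\sum_{k=0}^{(i+1)t}\binom{k+i-1}{k}\binom{nt-k+n-i-1}{nt-k},
\]
as claimed. One should note that for $k>nt$ the second binomial vanishes, so if $(i+1)t>nt$ (which does not happen here since $i\le n-1$) the extra terms would contribute nothing; in any case the stated upper limit is the effective one.

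The main obstacle — though it is more bookkeeping than genuine difficulty — is justifying that the only facet inequality beyond positivity that survives when intersecting $t\mathcal{P}$ with the lattice $\mathbb{Z}\log(F)$ is the single inequality coming from $\nu_{\sigma^0[i]}$, and that the normalization constant $d=\gcd(n,i+1)$ from Lemma 5.1 does not alter the count (it divides both sides of the inequality and so is irrelevant for integer points). This rests on Lemma 4.1's irreducible representation $\mathbb{R}_+A=H^+_{\nu_{\sigma^0[i]}}\cap\bigcap_{k}H^+_{\nu_{\sigma^k[n-1]}}$, where the hyperplanes $H_{\nu_{\sigma^k[n-1]}}$ merely record nonnegativity of individual coordinates. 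Once that reduction is in place, the remaining step is the elementary two-block stars-and-bars argument above, and the algebraic identity $\binom{k+i-1}{k}=\binom{k+i-1}{i-1}$ makes the two displayed forms agree.
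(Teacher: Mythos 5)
Your proposal is correct and follows essentially the same route as the paper: reduce to the normalized Ehrhart function via normality and Theorem 6.1, describe $t\mathcal{P}$ by the facet inequalities from Lemma 4.1 (which turn into $0\le \alpha_1+\dots+\alpha_i\le (i+1)t$ on the slice $|\alpha|=nt$), and then count by stars and bars after stratifying by the mass $k$ on the first $i$ coordinates. Your explicit derivation of $k\le (i+1)t$ from the inequality $(n-i-1)k\le (i+1)(nt-k)$ and your remark on the lattice $\mathbb{Z}\log(F)$ only make explicit what the paper leaves implicit.
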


\begin{proof}
From {\cite{HH}} we know that the $K-$ algebra $K[{\bf{\mathcal{A}}}]$ is normal. Thus, to compute the Hilbert function of $K[{\bf{\mathcal{A}}}]$ it is equivalent to compute the Ehrhart function of ${\mathcal{P}},$ where ${\mathcal{P}} \ = \ conv(A).$  \\ It is clear enough that $\mathcal{P}$ is the intersection of the cone $\mathbb{R}_{+}A$ with the hyperplane $x_{1}+ \ldots +x_{n}=n,$ thus \[\mathcal{P}=\{ \alpha\in \mathbb{R}^n \ | \ \alpha_{k}\geq 0 \ for \ any \ k\in [n], \ 0\leq \alpha_{1}+\ldots + \alpha_{i}\leq i+1 \ and \ \alpha_{1}+\ldots + \alpha_{n} = n \}\] and  
\[t \ \mathcal{P}=\{ \alpha\in \mathbb{R}^n \ | \ \alpha_{k}\geq 0 \ for \ any \ k\in [n], \ 0\leq \alpha_{1}+\ldots + \alpha_{i}\leq (i+1) \ t \ and \ \alpha_{1}+\ldots + \alpha_{n} = n \ t \}.\]

Since for any $0\leq k \leq (i+1) \ t$ the equation $ \alpha_{1}+\ldots + \alpha_{i} = k$ has $\binom{k+i-1}{k}$ nonnegative integer solutions and the equation $\alpha_{i+1}+\ldots + \alpha_{n} = n \ t -k$ has $\binom{nt-k+n-i-1}{nt-k}$ nonnegative integer solutions, we get that \[E_{\mathcal{P}}(t)=|\ \mathbb{Z} \ A \ \cap \ t \ \mathcal{P} \ | = \sum_{k=0}^{(i+1)t}\binom{k+i-1}{k}\binom{nt-k+n-i-1}{nt-k}.\]
\end{proof}

\begin{corollary}
The Hilbert series of $K-$ algebra $K[{\bf{\mathcal{A}}}],$ where $\bf{\mathcal{A}}$ satisfied the hypothesis of $Lemma \ 4.1.$ is :\[H_{K[{\bf{\mathcal{A}}}]}(t) \ = \ \frac{1+h_{1} \ t + \ldots + h_{n-1} \ t^{n-1} }{(1-t)^{n}},\] where 
\[h_{j}=\sum_{s=0}^{j}(-1)^{s} \ h(j-s) \ \binom{n}{s},\]
$h(s)$ is the Hilbert function of $K[{\bf{\mathcal{A}}}]$.
\end{corollary}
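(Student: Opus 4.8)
The plan is to start from the defining identity $H_{K[{\bf{\mathcal{A}}}]}(t)=\sum_{j\geq 0}h(j)\,t^{j}$, where $h$ is the Hilbert function computed in Proposition 6.2, and to rewrite this generating function as a rational function by clearing the denominator $(1-t)^{n}$.

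First I would recall that, by \cite{HH}, $K[{\bf{\mathcal{A}}}]$ is normal, hence Cohen--Macaulay, and that its Krull dimension is $n$; the latter is exactly the statement $\dim\mathbb{R}_{+}A=n$ established in Lemma 4.1, together with the fact that $K[{\bf{\mathcal{A}}}]$ is a standard graded affine semigroup ring of that dimension. By the Hilbert--Serre theorem the Hilbert series then has the form $H_{K[{\bf{\mathcal{A}}}]}(t)=Q(t)/(1-t)^{n}$ for a unique polynomial $Q(t)\in\mathbb{Z}[t]$ with $Q(1)\neq 0$. Writing $Q(t)=\sum_{j\geq 0}h_{j}\,t^{j}$ and comparing the coefficient of $t^{j}$ on both sides of $Q(t)=(1-t)^{n}\sum_{j\geq 0}h(j)\,t^{j}$, using the expansion $(1-t)^{n}=\sum_{s=0}^{n}(-1)^{s}\binom{n}{s}t^{s}$, yields at once
\[
h_{j}=\sum_{s=0}^{\min(j,n)}(-1)^{s}\binom{n}{s}h(j-s).
\]
Since $h(0)=1$ this gives $h_{0}=1$, so the numerator starts with $1$.

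The one remaining point is to show that $h_{j}=0$ for $j\geq n$, i.e. that $\deg Q\leq n-1$, so that the numerator is $1+h_{1}t+\cdots+h_{n-1}t^{n-1}$ with the coefficients $h_{j}$, $1\leq j\leq n-1$, given by the displayed formula (whose upper summation limit is just $j$ when $j\leq n-1$). For this I would invoke Corollary 5.2: the $a$-invariant of $K[{\bf{\mathcal{A}}}]$ equals $-1$. For a Cohen--Macaulay standard graded algebra the $a$-invariant is the degree of the Hilbert series as a rational function, namely $\deg Q-n$; hence $\deg Q=n-1$. Equivalently, and more concretely, since $K[{\bf{\mathcal{A}}}]$ is Cohen--Macaulay with $a(K[{\bf{\mathcal{A}}}])=-1$, the Hilbert function $h(j)$ agrees with the Hilbert polynomial for every $j\geq 0$, and that polynomial has degree $\dim K[{\bf{\mathcal{A}}}]-1=n-1$; the $n$-th finite difference $\sum_{s=0}^{n}(-1)^{s}\binom{n}{s}h(j-s)$ of a polynomial of degree $n-1$ in $j$ vanishes for all $j\geq n$, which is precisely the assertion $h_{j}=0$ for $j\geq n$.

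I expect no real obstacle here: the only care needed is the bookkeeping of index ranges and the justification that the numerator has degree exactly $n-1$, and both are immediate once Corollary 5.2 and the Cohen--Macaulay property from \cite{HH} are invoked. Everything else is a routine coefficient comparison in the ring of formal power series over $\mathbb{Z}$.
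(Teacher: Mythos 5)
Your proposal is correct and follows essentially the same route as the paper: both invoke normality (hence Cohen--Macaulayness) from \cite{HH}, the dimension $n$ from Lemma 4.1, and the $a$-invariant $-1$ from Corollary 5.2 to bound the numerator degree by $n-1$, and both obtain the coefficients $h_{j}$ as the $n$-th finite difference of $h$ --- the paper proves the identity $\Delta^{n}(h)_{j}=\sum_{s=0}^{n}(-1)^{s}h(j-s)\binom{n}{s}$ by induction, which is exactly your coefficient comparison in $(1-t)^{n}\sum_{j}h(j)t^{j}$. If anything, your argument that $h_{j}=0$ for $j\geq n$ (via the Hilbert polynomial having degree $n-1$) is spelled out more explicitly than in the paper, which leaves that point implicit.
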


\begin{proof}
Since the $a-invariant$ of $K[{\bf{\mathcal{A}}}]$ is $a(K[{\bf{\mathcal{A}}}])= \ -1,$ it follows that to compute the Hilbert series of $K[{\bf{\mathcal{A}}}]$ is necessary to know the first $n$ values of the Hilbert function of $K[{\bf{\mathcal{A}}}]$, $h(i)$ for $0\leq i \leq n-1.$
Since $dim(K[{\bf{\mathcal{A}}}])=n,$ apllying $n$ times the $difference \ operator \ \Delta$ \ $ ({see \ \cite{BH}})$  on the Hilbert function of $K[{\bf{\mathcal{A}}}]$ we get the conclusion.\\
Let $\Delta^{0}(h)_{j}:=h(j)$ for any
$0\leq j \leq n-1.$\\ For $k\geq 1$ let $\Delta^{k}(h)_{0}:=1$
and $\Delta^{k}(h)_{j}:=\Delta^{k-1}(h)_{j}-\Delta^{k-1}(h)_{j-1}$
for any $1\leq j \leq n-1.$\\
We claim that:
\[\Delta^{k}(h)_{j}=\sum_{s=0}^{k}(-1)^sh(j-s)\binom{k}{s}\]
for any $k\geq 1$ and $0\leq j \leq n-1.$\\
We proceed by induction on $k.$\\
If $k=1,$ then
\[
\Delta^{1}(h)_{j}=\Delta^{0}(h)_{j}-\Delta^{0}(h)_{j-1}=h(j)-h(j-1)
=\sum_{s=0}^{1}(-1)^{s}h(j-s)\binom{1}{s}
\]
for any $1\leq j \leq n-1.$\\
If $k>1,$ then
\[\Delta^{k}(h)_{j}=\Delta^{k-1}(h)_{j}-\Delta^{k-1}(h)_{j-1}=\sum_{s=0}^{k-1}(-1)^{s}h(j-s)
\binom{k-1}{s}-\sum_{s=0}^{k-1}(-1)^{s} h(j-1-s)
\binom{k-1}{s}=\]
\[=h(j)\binom{k-1}{0}+\sum_{s=1}^{k-1}(-1)^{s}h(j-s)
\binom{k-1}{s}-\sum_{s=0}^{k-2}(-1)^{s} h(j-1-s)
\binom{k-1}{s}+(-1)^kh(j-k)\binom{k-1}{k-1}=\]
\[=h(j)+\sum_{s=1}^{k-1}(-1)^{s}h(j-s)\left[\binom{k-1}{s}+\binom{k-1}{s-1}\right]+(-1)^kh(j-k)\binom{k-1}{k-1}=\ \ \ \ \ \ \ \ \ \ \ \ \ \ \ \ \ \ \ \ \ \ \ \ \ \ \ \ \]
\[=h(j)+\sum_{s=1}^{k-1}(-1)^{s}h(j-s)\binom{k}{s}+(-1)^kh(j-k)\binom{k-1}{k-1}=\sum_{s=0}^{k}(-1)^{s}h(j-s)\binom{k}{s}.\ \ \ \ \ \ \ \ \ \ \ \ \ \ \ \ \ \ \ \ \ \ \ \ \]
Thus, if $k=n$ it follows that: 
\[h_{j}=\Delta^{n}(h)_{j}=\sum_{s=0}^{n}(-1)^sh(j-s)\binom{n}{s}=\sum_{s=0}^{j}(-1)^sh(j-s)\binom{n}{s}\]
for any $1\leq j\leq n-1.$

\end{proof}

\vspace{2mm} \noindent {\footnotesize
\begin{minipage}[b]{10cm}
Alin \c{S}tefan, Assistant Professor\\
"Petroleum and Gas" University of Ploie\c{s}ti\\
Ploie\c{s}ti, Romania\\
E-mail:nastefan@upg-ploiesti.ro
\end{minipage}}

\begin{thebibliography}{99}


\bibitem{B} A.Br{\o}ndsted, {\it Introduction to Convex Polytopes}, Graduate Texts in Mathematics 90, Springer-Verlag, 1983.
\bibitem{BH}  W. Bruns, J. Herzog, {\it Cohen-Macaulay rings,} Revised
Edition, Cambridge, 1997.

\bibitem{BG} W.Bruns, J.Gubeladze, {\it Polytopes, rings and
K-theory,} preprint.

\bibitem{BK} W. Bruns, R. Koch, Normaliz -a program for computing
normalizations of affine semigroups, 1998. Available via anonymous
ftp from: ftp.mathematik.Uni-Osnabrueck.DE/pub/osm/kommalg/software.

\bibitem{E} J Edmonds, {\it Submodular functions, matroids, and certain
polyedra,} in Combinatorial Structures and Their Applications, R
Guy, H.Hanani, N. Sauer and J. Schonheim (Eds.), Gordon and
Breach, New York, 1970.


\bibitem{O} J. Oxley, {\it Matroid Theory}, Oxford University Press,
Oxford, 1992.

\bibitem{HH} J.Herzog and T.Hibi, {\it Discrete polymatroids.} J. Algebraic Combin. 16(2002), no. 3, 239-268.


\bibitem{MS} E.Miller and B Sturmfels, {\it Combinatorial commutative
algebra}, Graduate Texts in Mathematics, 227, Springer-Verlag,
New-York 2005.

\bibitem{V}  R. Villarreal, {\it Monomial Algebras,} Marcel Dekker, 2001.


\bibitem{W} R. Webster, {\it Convexity}, Oxford University Press, Oxford, 1994.

\bibitem{W1} D. Welsh, {\it Matroid Theory}, Academic Press, London,
1976.


\end{thebibliography}
\end{document}